\theoremstyle{plain}
\newtheorem{thm}{Theorem}
\newtheorem{cor}[thm]{Corollary}
\newtheorem{lem}[thm]{Lemma}
\theoremstyle{definition}
\theoremstyle{remark}
\newtheorem{rem}[thm]{Remark}
\newcommand{\norm}[1]{\left\Vert#1\right\Vert}
\newcommand{\Real}{\mathbb R}
\begin{document}
\bibliographystyle{plain}

\title{Vanishing viscosity as a selection principle for the Euler equations: The case of 3D shear flow}
\author{Claude Bardos\footnote{Laboratoire J.-L. Lions, Universit\'{e} de Paris VII ``Denis Diderot'', 75009 Paris, France} ,  Edriss S. Titi\footnote{Department of Mathematics and Department of Mechanical and Aero-space Engineering,
University of California, Irvine, CA 92697-3875, USA. \emph{Also}: The Department of Computer Science and Applied Mathematics,
The Weizmann Institute of Science, Rehovot 76100, Israel.} , and  Emil Wiedemann\footnote{Mathematisches Institut, Universit\"{a}t Leipzig, 04103 Leipzig, Germany}}
\date{August 11, 2012}
\maketitle
\begin{abstract}
We show that for a certain family of initial data, there exist non-unique weak solutions to the 3D incompressible Euler equations satisfying the weak energy inequality, whereas the weak limit of every sequence of Leray-Hopf weak solutions for the Navier-Stokes equations, with the same initial data, and as the viscosity tends to zero, is uniquely determined and equals the shear flow solution of the Euler equations. This simple example suggests that, also in more general situations, the vanishing viscosity limit of the Navier-Stokes equations could serve as a uniqueness criterion for weak solutions of the Euler equations.
\end{abstract}

\section{Introduction}
Consider the incompressible Euler equations,
\begin{equation}\label{euler}
\begin{aligned}
\partial_tv+v\cdot\nabla v+\nabla p&=0\\
\operatorname{div}v&=0,
\end{aligned}
\end{equation}
on the $d$-dimensional Torus $\mathbb{T}^d=(-1/2,1/2)^d$, $d\geq2$. A weak solution (i.e. a solution in the sense of distributions) $v\in C([0,T];L^2_w(\mathbb{T}^d))$ is said to be \emph{admissible} if it satisfies the weak energy inequality, i.e. if
\begin{equation}\label{energy}
\frac{1}{2}\int_{\mathbb{T}^d}|v(x,t)|^2dx\leq\frac{1}{2}\int_{\mathbb{T}^d}|v(x,0)|^2dx
\end{equation}
for every $t\in[0,T]$. Here, $T\in(0,\infty]$ and $C([0,T];L^2_w(\mathbb{T}^d))$ denotes the space of vector fields that are continuous as maps from $[0,T]$ into $L^2(\mathbb{T}^d)$ with respect to the weak topology in $L^2$.

It has been established in \cite{euler2} and \cite{euleryoung} (see Corollary 3 therein) that there exists a large set of initial data which admit infinitely many admissible weak solutions of (\ref{euler}). Notice that the cited results can be adapted to the case of periodic boundary conditions, considered here, in a straightforward manner. These initial data, however, are constructed in a rather abstract way. Recently, L. Sz\'{e}kelyhidi Jr. \cite{vortexpaper} exhibited the first example of concretely given initial data with the above mentioned  non-uniqueness property. More precisely, consider the flat vortex sheet in two dimensions given by
\begin{equation}\label{vortexsheet}
v_0(x)=\begin{cases}
e_1 & \text{if $x_2>0$}\\
-e_1 & \text{if $x_2<0$.}
\end{cases}
\end{equation}
Obviously, the stationary solution $v(\cdot,t)=v_0$ for all $t>0$ satisfies the Cauchy problem for the Euler equations in the weak sense. Sz\'{e}kelyhidi's result can be stated as follows:
\begin{thm}\label{vortexthm}
There exist $T>0$ and infinitely many weak solutions to the Euler equations in $\mathbb{T}^2\times[0,T]$ with initial data $v_0$ and pressure zero. Among these, infinitely many conserve the kinetic energy in time, and infinitely many have strictly decreasing energy.
\end{thm}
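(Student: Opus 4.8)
The plan is to prove Theorem~\ref{vortexthm} by the convex integration technique of De~Lellis and Sz\'ekelyhidi, in the subsolution formulation tailored to the energy inequality. Recall that, for a bounded density $e=e(x,t)\ge 0$, a \emph{subsolution} on $\mathbb{T}^2\times(0,T)$ is a triple $(\bar v,\bar u,\bar q)$, with $\bar v$ divergence free, $\bar u$ symmetric and trace free and $\bar q$ scalar, which solves
\begin{equation*}
\partial_t\bar v+\operatorname{div}\bar u+\nabla\bar q=0,\qquad\operatorname{div}\bar v=0
\end{equation*}
in the sense of distributions and satisfies the pointwise inequality $\bar v\otimes\bar v-\bar u\le e\,\operatorname{Id}$ (as symmetric matrices), with \emph{strict} inequality on a prescribed open ``mixing zone'' $\mathcal U$ and equality on its complement. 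The black box I would invoke from \cite{euler2,euleryoung} is that any sufficiently regular such subsolution can be perturbed, by a convex integration / Baire category argument inside $\mathcal U$, into infinitely many pairwise distinct weak solutions $v$ of \eqref{euler} which equal $\bar v$ off $\mathcal U$ and satisfy $\tfrac12\,|v(x,t)|^2=e(x,t)$ for a.e.\ $(x,t)\in\mathcal U$. As the stationary vortex sheet is itself a subsolution with $\mathcal U=\emptyset$, the whole task reduces to constructing one genuine subsolution anchored at $v_0$.

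For that construction I would use the two symmetries of the Cauchy problem for $v_0$: invariance under translation in $x_1$, and under the scaling $(x,t)\mapsto(\lambda x,\lambda t)$. Accordingly I look for an $x_1$-independent, self-similar subsolution, with mixing zone $\mathcal U=\{\,|x_2|<\gamma t\,\}$ for a free speed $\gamma>0$, on which $\bar v(x_2,t)=(\beta(x_2/t),0)$ for a monotone profile $\beta$ joining $-1$ to $1$; then $\operatorname{div}\bar v=0$ automatically, and the momentum equation collapses to an ODE in the similarity variable $\zeta=x_2/t$ that determines $\bar u$ and $\bar q$ up to a few free functions of $\zeta$. Off $\mathcal U$ one sets $\bar v=v_0$, which (since $|v_0|\equiv1$) forces $e=\tfrac12$ and a specific constant value of $\bar u$ there, and the residual freedom in $\beta$ and in the free functions is spent so as to make the Rankine--Hugoniot conditions hold along $\{\,|x_2|=\gamma t\,\}$, to keep the inequality $\bar v\otimes\bar v-\bar u\le e\,\operatorname{Id}$ strict throughout $\mathcal U$, and to arrange that the pressure of the exact Euler solutions it produces is identically zero.

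The two energy regimes then come from the choice of $e$ on $\mathcal U$: since $\tfrac12\int_{\mathbb{T}^2}|v(\cdot,t)|^2\,dx=\int_{\mathcal U_t}e\,dx+\tfrac12\big(1-|\mathcal U_t|\big)$, where $\mathcal U_t$ is the time-$t$ slice, taking $e\equiv\tfrac12$ on $\mathcal U$ gives solutions conserving the kinetic energy, while taking $e<\tfrac12$ on a part of $\mathcal U$ (still above the top eigenvalue of $\bar v\otimes\bar v-\bar u$, so that the subsolution inequality is preserved) gives solutions with strictly decreasing energy; letting $\gamma$, $\beta$ and $e$ vary yields infinitely many of each type, and $v(\cdot,t)\to v_0$ in $L^2$ as $t\to0$ because $|\mathcal U_t|\to0$, the solutions are uniformly bounded and they coincide with $v_0$ outside $\mathcal U$. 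The genuinely hard step --- where I would expect essentially all of the work to sit --- is the explicit construction of the profile: the strict eigenvalue inequality on the entire mixing zone has to be reconciled at once with the ceiling $e\le\tfrac12$, with the matching conditions at $|x_2|=\gamma t$, and with the vanishing of the resulting pressure, and this severely constrains the admissible $\beta$ and the admissible range of $\gamma$; only once such a profile is exhibited can the convex integration machinery be applied as a black box.
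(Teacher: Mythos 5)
First, a point of reference: the paper does not prove Theorem~\ref{vortexthm} at all --- it is quoted as a black box from \cite{vortexpaper}, so there is no internal proof to compare your attempt against. Measured against the argument actually given in that reference, your roadmap is the correct one: Sz\'ekelyhidi does precisely what you describe, building an $x_1$-independent, self-similar subsolution whose mixing zone is a wedge $\set{|x_2|<\gamma t}$ opening linearly in time, gluing it to the unperturbed vortex sheet outside the wedge, and then invoking the convex integration / Baire category machinery of \cite{euler2} to generate infinitely many weak solutions with prescribed kinetic energy density $e$ inside the zone; the split between energy-conserving and strictly dissipative solutions is obtained, as you say, from the choice $e\equiv\tfrac12$ versus $e<\tfrac12$ on part of the zone, and weak $L^2$-continuity down to $t=0$ follows from $|\mathcal{U}_t|\to0$ together with the uniform bound.

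That said, as written your text is a plan rather than a proof: the one step you yourself flag as carrying ``essentially all of the work'' --- exhibiting a subsolution that satisfies the strict matrix inequality on the whole wedge, the Rankine--Hugoniot matching on $|x_2|=\gamma t$, and the pressure normalisation simultaneously --- is never carried out, and until it is, nothing has been established. For the record, in \cite{vortexpaper} this is done completely explicitly: inside the wedge one takes the linear profile $\bar v=(x_2/(\gamma t),0)$ together with a $\bar u$ depending only on $\xi=x_2/t$ whose off-diagonal entry is chosen to make the momentum equation hold, and the subsolution condition reduces to an elementary $2\times2$ eigenvalue computation that pins down the admissible $\gamma$. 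Two smaller items to track if you complete the construction: the statement asserts the solutions have pressure \emph{zero}, and since the exact solutions produced by the machinery carry pressure $\bar q$ minus a fixed multiple of $e$, this is a normalisation of $\bar q$ that must be verified to be compatible with the matching conditions; and the claim $\tfrac12|v|^2=e$ a.e.\ in $\mathcal U$ holds with your normalisation of the inequality only in dimension $2$ (trace of $e\operatorname{Id}$ equals $2e$), so the constant bookkeeping in the energy identity should be checked rather than assumed.
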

Here, the kinetic energy is defined as in equation (\ref{energy}) above. Sz\'{e}kelyhidi also observed that, clearly, any sequence of the Leray-Hopf weak solutions of the Navier-Stokes equations, corresponding to the initial data (\ref{vortexsheet}), converges weak$-*$ (in fact even strongly) in $L^{\infty}([0,T];L^2(\mathbb{T}^2))$ to the stationary solution of the Euler equations, as the viscosity tends to zero. Hence, being a vanishing viscosity limit distinguishes the stationary solution from all the other weak solutions of the Euler equations stated in Theorem \ref{vortexthm}. The aim of the present note is to prove a similar statement for the slightly more sophisticated case of the three-dimensional shear flow.

\section{3D shear flow}
In three dimensions, any initial data of the form $v_0(x)=(v_1(x_2),0,v_3(x_1,x_2))$ has the shear flow solution of the Euler equations given by
\begin{equation}\label{shearflow}
v(x,t)=(v_1(x_2),0,v_3(x_1-tv_1(x_2),x_2))
\end{equation}
(see \cite{Bardos-Titi} and \cite{DM} for results concerning this shear flow). Note in particular that $v(x,t)$ is periodic in $x$ if $v_0$ is.

Sz\'{e}kelyhidi's result can now easily be extended to the case of three-dimensional shear flows as follows:
\begin{cor}
Let $v_0(x)=(v_1(x_2),0,v_3(x_1,x_2))$, where
\begin{equation*}
v_1(x_2)=\begin{cases}
1 & \text{if $ ~0<x_2 < 1/2$}\\
-1 & \text{if $-1/2<x_2<0$} ,
\end{cases}
\end{equation*}
extended periodically, with basic period $(-1/2,1/2)$,
and $v_3\in L^2_{x_1,x_2}(\mathbb{T}^2)$ is arbitrary. Then there exist $T>0$, and infinitely many admissible weak solutions of the 3D Euler equations on $\mathbb{T}^3\times[0,T]$ with initial data $v_0$.
\end{cor}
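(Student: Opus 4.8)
The plan is to \emph{lift} the two-dimensional solutions furnished by Theorem~\ref{vortexthm} to three dimensions by adjoining a third velocity component that is passively transported by the two-dimensional flow; this operation keeps the Euler structure, the initial datum, and the energy inequality intact. More precisely, by Theorem~\ref{vortexthm} there are $T>0$ and infinitely many pairwise distinct \emph{admissible} weak solutions $w=(w_1,w_2)\in C([0,T];L^2_w(\mathbb{T}^2))$ of the 2D Euler equations with pressure zero and initial datum the flat vortex sheet ($e_1$ for $x_2>0$, $-e_1$ for $x_2<0$) --- for instance those conserving the kinetic energy; as is intrinsic to the convex-integration construction, each such $w$ is bounded, $w\in L^\infty(\mathbb{T}^2\times[0,T])$. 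Extended $(-1/2,1/2)^2$-periodically, this vortex-sheet datum is precisely $(v_1(x_2),0)$.

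Fix one such admissible $w$. I would then produce $v_3=v_3(x_1,x_2,t)\in C([0,T];L^2_w(\mathbb{T}^2))$ solving, in the sense of distributions, the linear conservative transport equation
\[
  \partial_t v_3+\operatorname{div}_{x_1,x_2}(v_3\,w)=0,\qquad v_3(\cdot,0)=v_3,
\]
and obeying $\norm{v_3(\cdot,t)}_{L^2(\mathbb{T}^2)}\le\norm{v_3}_{L^2(\mathbb{T}^2)}$ for all $t$. Since $w$ is only bounded and divergence-free (not Lipschitz, and not in $W^{1,1}$), neither the method of characteristics nor the DiPerna--Lions theory applies directly; instead I would regularize, replacing $w$ by its spatial mollification $w^\delta=w\ast\rho_\delta$, which is smooth in $x$, divergence-free, and satisfies $\norm{w^\delta}_{L^\infty}\le\norm{w}_{L^\infty}$. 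The transport equation with velocity $w^\delta$ has the explicit solution obtained by pushing $v_3$ along the flow of $w^\delta$, which is measure-preserving because $\operatorname{div}w^\delta=0$; hence $\norm{v_3^\delta(\cdot,t)}_{L^2}=\norm{v_3}_{L^2}$ for every $t$. Extracting a weak-$*$ limit $v_3$ in $L^\infty([0,T];L^2(\mathbb{T}^2))$ as $\delta\to0$, the only term requiring care is the flux $v_3^\delta w^\delta$: one writes $v_3^\delta w^\delta=v_3^\delta(w^\delta-w)+v_3^\delta w$ and passes to the limit using $w^\delta\to w$ in $L^2(\mathbb{T}^2\times[0,T])$ together with the uniform $L^2$-bound on $v_3^\delta$ for the first summand, and the weak convergence of $v_3^\delta$ tested against the fixed $L^\infty$ field $w$ for the second. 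Lower semicontinuity of the norm gives the energy bound; $v_3$ lies in $C([0,T];L^2_w)$ because it solves a transport equation with bounded coefficient; and since the transport acts only in $(x_1,x_2)$, $v_3$ is independent of $x_3$.

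Next I would set $v:=(w_1,w_2,v_3)$ on $\mathbb{T}^3\times[0,T]$, with pressure $p\equiv0$, and verify that this is an admissible weak solution with initial datum $v_0$. Indeed $\operatorname{div}v=\operatorname{div}_{x_1,x_2}(w_1,w_2)+\partial_{x_3}v_3=0$; in the first two components of the momentum equation the only new flux is $\partial_{x_3}(w_iv_3)$, which vanishes as $w_iv_3$ does not depend on $x_3$, so these reduce to the 2D Euler equations for $w$; and in the third component the flux $\partial_{x_3}(v_3^2)$ likewise vanishes, leaving $\partial_tv_3+\operatorname{div}_{x_1,x_2}(v_3w)=0$, which holds by construction. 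Since $(w_1,w_2)(\cdot,0)=(v_1(x_2),0)$ and $v_3(\cdot,0)=v_3$, the initial datum is $v_0$. For the energy, $v_3$ being independent of $x_3$, Fubini gives $\int_{\mathbb{T}^3}\abs{v(x,t)}^2dx=\int_{\mathbb{T}^2}\abs{w(\cdot,t)}^2dx+\norm{v_3(\cdot,t)}^2_{L^2(\mathbb{T}^2)}$, which by the admissibility of $w$ (note $\int_{\mathbb{T}^2}\abs{(v_1(x_2),0)}^2dx=1$) and the energy bound for $v_3$ is $\le 1+\norm{v_3}^2_{L^2(\mathbb{T}^2)}=\int_{\mathbb{T}^3}\abs{v_0}^2dx$. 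Finally, distinct choices of $w$ yield 3D solutions with distinct first two components, hence pairwise distinct, and we obtain the desired infinitely many admissible weak solutions sharing the initial datum $v_0$.

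The only genuinely delicate step is the construction of $v_3$ together with the non-increase of its $L^2$ norm, the difficulty being that the transporting field $w$ coming from convex integration is no better than bounded and divergence-free; the regularization above sidesteps this by transferring the exact volume preservation of the smooth flows of the $w^\delta$ through the weak limit. Everything else is a direct verification that the shear-type structure $v=(w_1,w_2,v_3)$ with $v_3$ independent of $x_3$ is compatible both with the Euler system (the pressure staying zero) and with the weak energy inequality.
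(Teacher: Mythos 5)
Your proposal is correct and follows essentially the same route as the paper: extend a 2D admissible vortex-sheet solution from Theorem \ref{vortexthm} by a third, $x_3$-independent component that solves the linear transport equation with that 2D velocity, and obtain the energy inequality from the non-increase of the $L^2$ norm of the transported component. The only difference is presentational: where you construct the transport solution by mollifying the velocity field and passing to the weak limit, the paper simply cites Proposition II.1 of DiPerna--Lions (and Appendix A of \cite{euler2} for continuity into $L^2_w$), and your regularization argument is in substance the proof of that cited proposition.
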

\begin{proof}
Take $u(x,t)=(u_1(x_1,x_2,t),u_2(x_1,x_2,t))$ to be a solution to the 2D vortex sheet problem as in Theorem \ref{vortexthm}. Then, the triple
\begin{equation*}
(u_1(x_1,x_2,t),u_2(x_1,x_2,t),w(x_1,x_2,t))
\end{equation*}
will be a weak solution of the 3D Euler equations (with zero pressure and initial data $v_0$) if $w$ is a weak solution of the 2D transport equation
\begin{equation}\label{transport}
\begin{aligned}
\partial_tw+u\cdot\nabla w&=0\\
w(t=0)&=v_3.
\end{aligned}
\end{equation}
Such a solution $w\in L^{\infty}((0,T);L^2(\mathbb{T}^2))$ exists; see Proposition II.1 in \cite{dipernalions}, which clearly holds also in the periodic setting. Moreover, we may even assume $w\in C((0,T);L^2_w(\mathbb{T}^2))$, see Appendix A of \cite{euler2}. Finally, as one can see from the proof of the cited Proposition II.1, we have
\begin{equation*}
\norm{w(\cdot,t)}_{L^2(\mathbb{T}^2)}\leq\norm{v_3}_{L^2(\mathbb{T}^2)}\hspace{0.5cm}\text{for every $t>0$}
\end{equation*}
(this is due to the weak lower semi-continuity of the norm in $L^{\infty}((0,T);L^2(\mathbb{T}^2))$). Hence our solution $(u_1,u_2,w)$ is an admissible weak solution of the 3D Euler equations.
\end{proof}
\begin{rem}
It is not possible to deduce from (\ref{transport}) that $\norm{w(\cdot,t)}_{L^2}$ is conserved in time: For this to hold, $w$ would have to be a renormalised solution in the sense of DiPerna-Lions \cite{dipernalions}, which is ruled out by the irregularity of Sz\'{e}kelyhidi's solutions.
\end{rem}
Before we state and prove the main result of this note, we need an auxiliary result:
\begin{lem}\label{uniquelemma}
Let $v\in L^2(\mathbb{T};\Real)$ and $w_0\in L^2(\mathbb{T}^2)$. Then the Cauchy problem for the linear transport equation
\begin{equation*}
\begin{aligned}
\partial_tw(x_1,x_2,t)+v(x_2)\partial_{x_1}w(x_1,x_2,t)&=0\\
w(\cdot,0)&=w_0
\end{aligned}
\end{equation*}
has a solution $w \in C([0,T];L_w^2(\mathbb{T}^2))$, satisfying the equation in the sense of distributions, and this solution is unique in the class $L^{\infty}((0,T);L^2(\mathbb{T}^2))$.
\end{lem}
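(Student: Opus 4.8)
The plan is to take advantage of the very special structure of the equation: the coefficient $v$ depends only on $x_2$, while the transport acts only in the $x_1$-direction, so a Fourier expansion in $x_1$ decouples the problem into a family (indexed by the mode $k\in\mathbb{Z}$, parametrised by $x_2$) of ordinary differential equations in $t$ with \emph{constant} coefficients. Writing $w_0(x)=\sum_{k\in\mathbb{Z}}w_{0,k}(x_2)e^{2\pi ikx_1}$ with $w_{0,k}\in L^2(\mathbb{T})$ and $\sum_k\norm{w_{0,k}}_{L^2(\mathbb{T})}^2=\norm{w_0}_{L^2(\mathbb{T}^2)}^2$, the natural candidate for the solution is
\begin{equation*}
w(x,t)=\sum_{k\in\mathbb{Z}}w_{0,k}(x_2)\,e^{-2\pi ikv(x_2)t}\,e^{2\pi ikx_1},
\end{equation*}
which can equivalently be written $w(x_1,x_2,t)=w_0(x_1-tv(x_2),x_2)$ and makes sense for a.e.\ $x_2$, i.e.\ wherever $v(x_2)$ is finite. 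Existence of a solution in $C([0,T];L^2_w(\mathbb{T}^2))$ is then routine: since $\abs{e^{-2\pi ikv(x_2)t}}=1$, the $k$-th Fourier coefficient $w_k(\cdot,t)$ has the same $L^2(\mathbb{T})$-norm as $w_{0,k}$ for every $t$, so the series converges in $L^2(\mathbb{T}^2)$ uniformly in $t$, one has $\norm{w(\cdot,t)}_{L^2(\mathbb{T}^2)}=\norm{w_0}_{L^2(\mathbb{T}^2)}$ for every $t$ (in particular $w\in L^\infty((0,T);L^2(\mathbb{T}^2))$), weak continuity in $t$ follows from dominated convergence applied to each mode together with this uniform bound, and the distributional form of the equation (with initial datum $w_0$) is obtained mode by mode from the pointwise identities $\partial_tw_k=-2\pi ikv(x_2)w_k$, $w_k(x_2,0)=w_{0,k}(x_2)$, by integrating by parts in $t$ and summing over $k$.

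The substance of the lemma is the uniqueness statement, so I would let $w\in L^{\infty}((0,T);L^2(\mathbb{T}^2))$ be a distributional solution with $w_0=0$ and aim to show $w\equiv0$. Taking the $k$-th Fourier coefficient in $x_1$, the function $w_k\in L^{\infty}((0,T);L^2(\mathbb{T}_{x_2}))$ satisfies
\begin{equation*}
\int_0^T\int_{\mathbb{T}}w_k\,\partial_t\eta\,dx_2\,dt=2\pi ik\int_0^T\int_{\mathbb{T}}v\,w_k\,\eta\,dx_2\,dt
\end{equation*}
for every $\eta\in C^{\infty}(\mathbb{T}\times[0,T))$ vanishing near $t=T$ (the admissible $\eta$ need not vanish at $t=0$, which is where the prescribed datum $w_0=0$ enters). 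I would then like to introduce the integrating factor $e^{2\pi ikv(x_2)t}$ and conclude that $e^{2\pi ikv(x_2)t}w_k(x_2,t)$ is independent of $t$, hence equal to its value $0$ at $t=0$, so that $w_k\equiv0$ for every $k$ and therefore $w\equiv0$. \textbf{The main obstacle} is precisely that this integrating factor, although smooth in $t$, is only bounded and measurable in $x_2$, so it is not \emph{a priori} an admissible test function in the weak formulation above.

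To get around this I would approximate $v$ by real-valued $v_n\in C^{\infty}(\mathbb{T})$ with $v_n\to v$ in $L^2(\mathbb{T})$ and a.e., so that for any $\zeta\in C^{\infty}(\mathbb{T}\times[0,T))$ vanishing near $t=T$ the function $\eta_n:=e^{2\pi ikv_n(x_2)t}\zeta$ \emph{is} admissible; inserting $\eta_n$ into the weak formulation, expanding $\partial_t\eta_n$ and rearranging gives
\begin{equation*}
\int_0^T\int_{\mathbb{T}}w_k\,e^{2\pi ikv_n t}\,\partial_t\zeta\,dx_2\,dt=2\pi ik\int_0^T\int_{\mathbb{T}}(v-v_n)\,w_k\,e^{2\pi ikv_n t}\,\zeta\,dx_2\,dt.
\end{equation*}
Passing to the limit $n\to\infty$, the left-hand side converges to $\int_0^T\int_{\mathbb{T}}w_k\,e^{2\pi ikv t}\,\partial_t\zeta$ by dominated convergence (using $w_k\in L^2(\mathbb{T}\times(0,T))$, since $T<\infty$, and $e^{2\pi ikv_n t}\to e^{2\pi ikv t}$ boundedly a.e.), while the right-hand side tends to $0$ by Cauchy--Schwarz, since $\norm{v-v_n}_{L^2(\mathbb{T}\times(0,T))}=T^{1/2}\norm{v-v_n}_{L^2(\mathbb{T})}\to0$ and $\norm{w_k\,e^{2\pi ikv_n t}\,\zeta}_{L^2(\mathbb{T}\times(0,T))}\leq\norm{\zeta}_{L^{\infty}}\norm{w_k}_{L^2(\mathbb{T}\times(0,T))}$ uniformly in $n$. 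Hence $\int_0^T\int_{\mathbb{T}}w_k\,e^{2\pi ikv(x_2)t}\,\partial_t\zeta\,dx_2\,dt=0$ for all such $\zeta$, which is exactly the assertion that $e^{2\pi ikv(x_2)t}w_k$ is $t$-independent with vanishing trace at $t=0$; therefore $w_k\equiv0$ for every $k$, and summing over $k$ yields $w\equiv0$. The conceptual reason this argument works even though $v$ is merely $L^2$ is that mollification in the $x_1$-variable commutes \emph{exactly} with the transport operator $\partial_t+v(x_2)\partial_{x_1}$ — the DiPerna--Lions commutator vanishes identically — so no regularity of $v$ is needed.
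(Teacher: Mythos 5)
The paper does not actually give a proof of this lemma --- it states ``We omit the elementary proof of the Lemma'' --- so there is nothing to compare against line by line; what you have done is supply the omitted argument, and your argument is correct and complete. Your Fourier decomposition in $x_1$ is the natural way to exploit the structure $v=v(x_2)$ (equivalently, mollification in $x_1$ commutes exactly with $v(x_2)\partial_{x_1}$, as you note), the explicit solution you build is exactly the shear/translation formula the paper uses, and you correctly identify and resolve the one genuine issue: the integrating factor $e^{2\pi i k v(x_2)t}$ is only measurable in $x_2$, so it must be reached by testing with $e^{2\pi i k v_n(x_2)t}\zeta$ for smooth $v_n\to v$ and passing to the limit, which your Cauchy--Schwarz estimate on the commutator term $(v-v_n)w_k$ handles properly. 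The only points worth polishing are small: the weak formulation you use should be stated once with the initial-data term $\int w_0\,\varphi(\cdot,0)$ included (so that ``vanishing trace at $t=0$'' is meaningful for a merely $L^\infty_tL^2_x$ solution), and one should remark that $v\,w\in L^1$ so that the product term in the distributional formulation makes sense --- both of which follow immediately from $v\in L^2(\mathbb{T})$ and $w\in L^\infty((0,T);L^2(\mathbb{T}^2))$ with $T<\infty$.
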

We omit the elementary proof of the Lemma.

\begin{thm}
Let again $v_0(x)=(v_1(x_2),0,v_3(x_1,x_2))$, where we assume $v_1\in L^2(\mathbb{T})$ and $v_3\in L^2(\mathbb{T}^2)$. Then, for every viscosity $\nu>0$, there exists a unique Leray-Hopf weak solution of the Navier-Stokes equations with viscosity $\nu$ and initial data $v_0$, and these solutions $u^{\nu}$ converge weak$-*$ in $L^{\infty}([0,T];L^2(\mathbb{T}^3))$ to the shear flow (\ref{shearflow}) corresponding to $v_0$, as $\nu\rightarrow0$.
\end{thm}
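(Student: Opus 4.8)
The plan is to solve the Navier--Stokes system explicitly inside the class of shear-type vector fields and then to show, by an energy estimate adapted to that structure, that no other Leray--Hopf solution exists; the passage $\nu\to0$ is then almost immediate.

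\emph{Construction of $u^\nu$.} Imitating the ansatz behind (\ref{shearflow}), I look for a solution of the form $u^\nu(x,t)=(a^\nu(x_2,t),\,0,\,b^\nu(x_1,x_2,t))$ with pressure $p\equiv0$. Then $\operatorname{div}u^\nu=\partial_{x_1}a^\nu+\partial_{x_3}b^\nu=0$ automatically, and the momentum equation decouples into the one-dimensional heat equation $\partial_t a^\nu=\nu\partial_{x_2}^2a^\nu$, $a^\nu(\cdot,0)=v_1$, and the two-dimensional advection--diffusion equation $\partial_t b^\nu+a^\nu\partial_{x_1}b^\nu=\nu(\partial_{x_1}^2+\partial_{x_2}^2)b^\nu$, $b^\nu(\cdot,0)=v_3$. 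Both have unique solutions in $C([0,T];L^2)\cap L^2((0,T);H^1)$ by standard linear theory; for $b^\nu$ the drift term causes no trouble since $\int a^\nu\partial_{x_1}b^\nu\,b^\nu\,dx=-\tfrac12\int(\partial_{x_1}a^\nu)(b^\nu)^2\,dx=0$, $a^\nu$ being independent of $x_1$. Multiplying the two scalar equations by $a^\nu$ and $b^\nu$ gives the energy identities, hence the viscous bounds $\nu\int_0^T\|\partial_{x_2}a^\nu\|_{L^2(\mathbb T)}^2\,dt\le\|v_1\|_{L^2}^2$ and $\nu\int_0^T\|\nabla b^\nu\|_{L^2(\mathbb T^2)}^2\,dt\le\|v_3\|_{L^2}^2$; adding the two energy identities shows that $u^\nu$ (which lies in $C([0,T];L^2(\mathbb T^3))\cap L^2((0,T);H^1)$) satisfies the energy \emph{equality}, so it is a Leray--Hopf weak solution with initial data $v_0$.

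\emph{Uniqueness --- the main point.} Let $u$ be any Leray--Hopf weak solution with data $v_0$ and put $\tilde u=u-u^\nu$. Arguing as in the proof of weak--strong uniqueness (combining the energy inequality for $u$, the energy equality for $u^\nu$, and the two identities obtained by testing each equation against the other solution; all terms below being finite thanks to the estimates that follow) one gets
\[
\|\tilde u(t)\|_{L^2}^2+2\nu\int_0^t\|\nabla\tilde u\|_{L^2}^2\,ds\ \le\ -2\int_0^t\int_{\mathbb T^3}(\tilde u\cdot\nabla u^\nu)\cdot\tilde u\,dx\,ds .
\]
The crucial point is that, since $u^\nu_2\equiv0$, $u^\nu_1=u^\nu_1(x_2)$ and $u^\nu_3=u^\nu_3(x_1,x_2)$, the integrand on the right collapses to $\tilde u_1\tilde u_2\,\partial_{x_2}a^\nu+\tilde u_1\tilde u_3\,\partial_{x_1}b^\nu+\tilde u_2\tilde u_3\,\partial_{x_2}b^\nu$, so only $\partial_{x_2}a^\nu$ and $\nabla_{(x_1,x_2)}b^\nu$ appear, and each depends on at most two of the three variables. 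For each term I would apply H\"older's inequality in the variable(s) carried by the $u^\nu$-factor, then an anisotropic Gagliardo--Nirenberg inequality on $\mathbb T^3$ (with $\|f\|_{L^4_{x_2}L^2_{x_1,x_3}}$ denoting the $L^4$-norm in $x_2$ of the $L^2_{x_1,x_3}$-norm, one has $\|f\|_{L^4_{x_2}L^2_{x_1,x_3}}\le C\|f\|_{L^2}^{3/4}\|f\|_{H^1}^{1/4}$ and $\|f\|_{L^4_{x_1,x_2}L^2_{x_3}}\le C\|f\|_{L^2}^{1/2}\|f\|_{H^1}^{1/2}$), and then Young's inequality, to obtain
\[
\Big|\int_{\mathbb T^3}(\tilde u\cdot\nabla u^\nu)\cdot\tilde u\,dx\Big|\ \le\ \tfrac{\nu}{2}\,\|\nabla\tilde u\|_{L^2}^2+g(t)\,\|\tilde u\|_{L^2}^2 ,
\]
with $g(t)\le C(\nu)\big(\|\partial_{x_2}a^\nu(\cdot,t)\|_{L^2}^{4/3}+\|\nabla b^\nu(\cdot,t)\|_{L^2}^2+1\big)$; by the viscous bounds above, $g\in L^1(0,T)$. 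Absorbing the $\nu$-term into the left-hand side and using $\tilde u(0)=0$, Gronwall's lemma forces $\tilde u\equiv0$. I expect this step to be the main obstacle: in general the merely $L^2$ regularity of $v_1$ would obstruct a weak--strong comparison near $t=0$, but here the shear structure lets the critical nonlinear term be controlled solely through the viscous dissipation of $a^\nu$ and $b^\nu$, with no $L^\infty$-type bound on $\nabla u^\nu$ required.

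\emph{Vanishing viscosity.} As $\nu\to0$, $a^\nu=e^{\nu t\partial_{x_2}^2}v_1\to v_1$ strongly in $C([0,T];L^2(\mathbb T))$. The family $\{b^\nu\}$ is bounded by $\|v_3\|_{L^2}$ in $L^\infty([0,T];L^2(\mathbb T^2))$, hence weak-$*$ precompact; along a subsequence $b^{\nu_k}$ converging weak-$*$ to some $\bar b$, I pass to the limit in the weak formulation: the viscous term $\nu_k\Delta b^{\nu_k}\to0$ in distributions, while the transport term, written $a^{\nu_k}\partial_{x_1}b^{\nu_k}=\partial_{x_1}(a^{\nu_k}b^{\nu_k})$ (again using $\partial_{x_1}a^{\nu_k}=0$), converges because $a^{\nu_k}\to v_1$ strongly and $b^{\nu_k}\rightharpoonup\bar b$ weakly in $L^2$. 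Thus $\bar b\in C([0,T];L^2_w(\mathbb T^2))$ solves $\partial_t\bar b+v_1(x_2)\partial_{x_1}\bar b=0$, $\bar b(\cdot,0)=v_3$, so by Lemma \ref{uniquelemma} it is unique and equals $v_3(x_1-tv_1(x_2),x_2)$. Since every weak-$*$ limit point of $\{b^\nu\}$ is this one function, the whole family converges, whence $u^\nu=(a^\nu,0,b^\nu)$ converges weak-$*$ in $L^\infty([0,T];L^2(\mathbb T^3))$ to $(v_1(x_2),0,v_3(x_1-tv_1(x_2),x_2))$, i.e. to the shear flow (\ref{shearflow}).
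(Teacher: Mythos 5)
Your proposal is correct and, for the existence of the structured solution and for the vanishing-viscosity limit, it coincides with the paper's argument: same ansatz $(a^\nu(x_2,t),0,b^\nu(x_1,x_2,t))$ with zero pressure, same reduction to the heat equation plus an advection--diffusion equation, and the same limit passage (uniform $L^\infty_tL^2_x$ bound, strong convergence of $a^\nu$ making the product $a^\nu b^\nu$ pass to the limit against a weakly convergent $b^\nu$, then Lemma \ref{uniquelemma} to identify the limit and upgrade subsequential to full convergence). The genuine difference is in the uniqueness step. The paper does not prove uniqueness among all 3D Leray--Hopf solutions; it delegates it entirely to Serrin-type arguments as carried out in \cite{BLNNT} and \cite{Iftimie-Raugel}. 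You instead sketch the argument: a relative energy (weak--strong) inequality for $\tilde u=u-u^\nu$, with the key observation that every component of $\nabla u^\nu$ depends on at most two variables, so anisotropic Ladyzhenskaya/Gagliardo--Nirenberg estimates close the Gronwall loop using only the $L^2_tL^2_x$ viscous bounds on $\partial_{x_2}a^\nu$ and $\nabla_{x_1,x_2}b^\nu$, with no Serrin-class regularity of $u^\nu$ required. This is exactly the mechanism behind the cited references, and your exponents check out ($\|\partial_{x_2}a^\nu\|_{L^2}^{4/3}$ and $\|\nabla b^\nu\|_{L^2}^{2}$ are both integrable in time on $[0,T]$). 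The one place where you are terser than a complete proof demands is the derivation of the relative energy inequality itself: since $u^\nu$ is only in $C([0,T];L^2)\cap L^2((0,T);H^1)$, using it as a test function in the weak formulation of $u$ and verifying that \emph{all} the cross terms (not only the final trilinear term in $\tilde u$) are finite requires the same anisotropic estimates plus a mollification argument; this is the technical content of \cite{BLNNT}, and your parenthetical remark points at it without carrying it out. So: same skeleton as the paper, but your version makes self-contained (in sketch form) the one step the paper outsources, which is a worthwhile trade if the details are written out.
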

\begin{proof}
Let $\nu>0$. The intuition that the solution of Navier-Stokes should preserve the particular structure of the initial data leads us to the ansatz
\begin{equation*}
u^{\nu}(x,t)=(u^{\nu}_1(x_2,t),0,u^{\nu}_3(x_1,x_2,t))
\end{equation*}
and $p^{\nu}=0$, where $p^{\nu}$ denotes the pressure. Inserting this into the Navier-Stokes equations gives the so called two-and-half Navier-Stokes equations
\begin{equation*}
\begin{aligned}
\partial_t u^{\nu}_1(x_2,t)-\nu\partial_{x_2}^2u^{\nu}_1(x_2,t)&=0\\
\partial_t u^{\nu}_3(x_1,x_2,t)+u^{\nu}_1(x_2,t)\partial_{x_1}u^{\nu}_3(x_1,x_2,t)-\nu\Delta_{x_1,x_2}u^{\nu}_3(x_1,x_2,t)&=0,
\end{aligned}
\end{equation*}
which is known to be globally well-posed for this kind of initial data (see, e.g., \cite{DM}, and see \cite{Constantin-Foias} for the global existence and uniqueness of weak solutions of the 2D Navier-Stokes equations). For completion, we observe that the first equation is simply the one-dimensional heat equation with initial data $v_1(x_2)$, whose solution obviously converges to the time-independent function $v_1(x_2)$ strongly in $L^2(\mathbb{T}\times[0,T])$, as the viscosity tends to zero. The second equation is an advection-diffusion equation. By standard parabolic theory (see e.g. Theorem 5 in Section 7.1.3 of \cite{evanspde}, which again can be adapted to the periodic case, or the 2D Navier-Stokes theory) there exists a (unique) solution
\begin{equation*}
u^{\nu}_3\in L^2([0,T];H^1(\mathbb{T}^2))\bigcap C([0,T];L^2(\mathbb{T}^2))
\end{equation*}
(recall that the initial data $v_3$ is in $L^2$). Hence, for every fixed $\nu > 0$,  we obtain a Leray-Hopf weak solution with the initial data  $v_0(x)$. Following ideas from \cite{Serrin}  (see, e.g., \cite{BLNNT} and \cite{Iftimie-Raugel} for details) one can show that this solution is unique within the class of all 3D Leray-Hopf weak solutions; moreover, this solution depends continuously on the initial data, when the initial data  is perturbed in the $L^2(\mathbb{T}^3)$ norm (see \cite{BLNNT}).
%
Furthermore, since the family of unique solutions, $u^{\nu}$, is uniformly bounded in $L^{\infty}_tL^2_x$, there exists a subsequence $u^{\nu_k}$ which converges weak$-*$ to $u\in L^{\infty}_tL^2_x$, and $u$ satisfies
\begin{equation}\label{limiteq}
\begin{aligned}
u_1(x_2,t)&=v_1(x_2)\\
u_2&=0\\
\partial_t u_3(x_1,x_2,t)+v_1(x_2)\partial_{x_1}u_3(x_1,x_2,t)&=0\\
u_3(x_1,x_2,0)&=v_3(x_1,x_2).
\end{aligned}
\end{equation}
Indeed, the equation for $u_3$ follows from $u_1^{\nu}u_3^{\nu}\stackrel{*}{\rightharpoonup}u_1u_3$, thanks to  the strong convergence of $u_1^{\nu}$ to $u_1$. Next, it follows from Lemma \ref{uniquelemma} above that system (\ref{limiteq}) has a \emph{unique} solution, and that this unique solution is given precisely by the shear flow (\ref{shearflow}) (see \cite{Bardos-Titi}). Finally, this uniqueness implies that the whole sequence $u^{\nu}$, and not just a subsequence, converges to the shear flow solution (\ref{shearflow}).
\end{proof}
Combining the Corollary and the Proposition, we see that among the infinitely many admissible solutions of the Euler equations that are corresponding to the shear flow initial data, $v_0(x) = (v_1(x_2),0,v_3(x_1))$, the shear flow solution given by (\ref{shearflow}) has the exclusive property of being a vanishing viscosity limit. \\

\textbf{Acknowledgement.} Part of this work was done while the third author was a visitor to the project ``Instabilities in Hydrodynamics'' of the Fondation Sciences Math\'{e}matiques de Paris. He gratefully acknowledges the Fondation's support. The work of the second author is supported in part by the NSF grants  DMS-1009950, DMS-1109640 and DMS-1109645, and by the Minerva Stiftung/Foundation.


\end{document}